\documentclass{amsart}

\usepackage{epsfig}
\usepackage{amsthm,amsfonts}
\usepackage{amssymb,graphicx,color}
\usepackage[all]{xy}
\usepackage{verbatim}
\usepackage{hyperref}
\usepackage{enumitem}
\usepackage [latin1]{inputenc}

\newtheorem{theorem}{Theorem}[section]
\newtheorem*{theorem*}{Theorem}

\newtheorem{dfn}[theorem]{Definition}

\newtheorem{remark}[theorem]{Remark}

\newcommand{\De}{\mathcal D}
\newcommand{\G}{\Gamma}
\newcommand{\E}{\mathcal E}
\newcommand{\n}{\nabla}
\newcommand{\om}{\omega}
\newcommand{\w}{\wedge}
\newcommand{\p}{\partial}

\begin{document}

\title[Calabi type surfaces]
{Calabi type K\"ahler surfaces}

\author[W.Jelonek]{W\l odzimierz Jelonek}

{\address[W{\l}odzimierz Jelonek]{ Katedra Matematyki Stosowanej, Politechnika Krakowska, Warszawska 24, 31-155 Krak\'ow, Poland. \newline
              E-mail: {\tt wjelon@pk.edu.pl}}

\keywords{QCH K\"ahler surfaces, Calabi type K\"ahler surface, special K\"ahler-Ricci potential}
\subjclass[2010]{53C55,53C25,53B35}

\begin{abstract}
We show that the K\"ahler surface with opposite Hermitian, conformally  K\"ahler structure defined by a foliation of complex curves, is a QCH  K\"ahler surface of Calabi type. \end{abstract}

\maketitle
%\tableofcontents
\section{Introduction}  {\it  QCH K\"ahler surfaces}   (i.e. K\"ahler surfaces with quasi constant holomorphic sectional curvature) (see \cite{ganchev}, \cite{jel1}-\cite{jel3}, \cite{jel5}-\cite{jel7} \cite{jel-mul}, \cite{mul}) are K\"ahler surfaces  $(M, g, J)$
admitting a global, $2$-dimen\-sional, $J$-invariant distribution
$\De$ having the following property: The holomorphic curvature
$$K(\pi)=R(X, J X, J X, X)$$ of any $J$-invariant $2$-plane $\pi\subset
T_xM$, where $X\in \pi$ and $g(X, X)=1$, depends only on the point
$x$ and  number $|X_{\De}|=\sqrt{g(X_{\De},X_{\De})}$, where
$X_{\De}=p_{\De}X$ is the orthogonal projection of $X$ onto $\De$  (\cite{jel1}).  Every  QCH K\"ahler surface admits an opposite almost Hermitian structure $I$, (i.e. its K\"ahler form $\om_I$ is anti-selfdual), such that the Ricci tensor $\rho$ of $(M,g,J)$ is $I$-invariant and $(M,g,I)$ satisfies the second Gray condition  (\cite{gray}, p.605):

\begin{equation*}(G2)\\\\\\\\\ R(X, Y, Z, W)-R(IX, IY, Z, W)=\end{equation*}\begin{equation*}R(IX, Y, IZ, W)+R(IX, Y, Z, IW).\end{equation*}
 If $R$ is the curvature tensor of a
QCH K\"ahler manifold $(M, g, J)$, then there exist functions $a,b,c\in
C^{\infty}(M)$ such that
\begin{equation}R=a\Pi+b\Phi+c\Psi,\end{equation} where $\Pi$ is the
standard K\"ahler tensor of constant holomorphic curvature i.e.
\begin{equation}
\Pi(X,Y,Z,U)=\frac14(g(Y,Z)g(X,U)-g(X,Z)g(Y,U)\end{equation}
\begin{equation*}+g(JY,Z)g(JX,U)-g(JX,Z)g(JY,U)-2g(JX,Y)g(JZ,U)),\end{equation*}
the tensor $\Phi$ is defined by the following relation

\begin{equation} \Phi(X,Y,Z,U)=\frac18(g(Y,Z)h(X,U)-g(X,Z)h(Y,U)\end{equation}
\begin{equation*}+g(X,U)h(Y,Z)-g(Y,U)h(X,Z)
+g(JY,Z)h(JX,U)\end{equation*}\begin{equation*}-g(JX,Z)h(JY,U)+g(JX,U)h(JY,Z)-g(JY,U)h(JX,Z)\end{equation*}
\begin{equation*}-2g(JX,Y)h(JZ,U)-2g(JZ,U)h(JX,Y))\end{equation*}and finally
\begin{equation}\Psi(X,Y,Z,U)=-h(JX,Y)h(JZ,U)=-(h_J\otimes h_J)(X,Y,Z,U).\end{equation} where $h_J(X,Y)=h(JX,Y)$ and   $h=g\circ(p_{\De}\times p_{\De})$.
In the paper \cite{jel3} we have proven that if a K\"ahler surface $(M,g,J)$ admits a complex totally geodesic and conformal foliation and   $I$ is the Hermitian structure determined by this foliation and  $d\om_I=2\theta\w\om_I$  then $(M,g,J)$  is QCH if and only if  $d\theta^-=0$.  Here we give another proof  for the case  $d\theta=0$.
In this paper we study  K\"ahler surfaces $(M,g,J)$ with an opposite Hermitian non-K\"ahler structure  $I$.  It means that  $\om_I^2=-\om_J^2$ where  $\om_I,\om_J$ are K\"ahler forms of  $(M,g,I)$ and $(M,g,J)$ respectively.    These two structures define two complex distributions $\De_{\pm}=\ker(I\circ J\pm id)$ on  $M$.    We here will consider the case where one of the distributions $\De_{\pm}$ is inegrable.  Then  the integrable disribution $\De$ is conformal  and totally geodesic.  In fact $L_Vg =\theta(V) g$ for $V\in \De$ on  $\De^{\perp}$ where  $d\om_I=2\theta\w\om_I$ and  $\theta$ is the Lee form of $(M,g,I)$. We will prove that in this case, under the condition that  the structure $I$  is conformally K\"ahler, the surface   $(M,g,J)$ is a QCH manifold of Calabi type. We also show that   $(M,g,J)$  carry a special  K\"ahler-Ricci potential.  A K\"ahler surface $(M,g,J)$  with the non-vanishing  holomorphic Killing vector field  $X$  is called of Calabi type if the distribution    $\De=span\{X,JX\}$  defines the Hermitian, conformally K\"ahler  structure $I$.  In this sense we prove that  a K\"ahler surface  with   opposite non-K\"ahler  conformally K\"ahler structure $I$ is of Calabi type in the set  $U=\{x\in M:\theta_x\ne0\}$ where $\theta$ is the Lee form  of the structure $I$ which means that $d\om_I=2\theta\w\om_I$. A  K\"ahler surface $(M,g,J)$  is called ambi-K\"ahler if it admits  an opposite  conformally K\"ahler Hermitian structure $I$.  We  in fact could give an alternative definition of a Calabi surface:

\begin{dfn} An ambi-K\"ahler surface $(M,g,J)$ for which  $I$ is not K\"ahler is a Calabi type K\"ahler surface if one of the distributions $\De_{\pm}=\ker(I\circ J\pm id)$ is integrable.\end{dfn}

The four dimensional  K\"ahler and Hermitian surfaces are studied by many authors (see  ex.  \cite{der},  \cite{dunajski}, \cite{jel4}, \cite{web}).

\section{K\"ahler surfaces with an opposite Hermitian structure}  In our paper \cite{jel3}, we have proved that a K\"ahler surface admits an opposite Hermitian structure defined by a complex foliation $\De$, whose leafs are complex curves, if $\De$ is totally geodesic and conformal   or equivalently if it is holomorphic and totally geodesic.
Recall the definitions:
\begin{dfn} A foliation $\mathcal F$ on a Riemannian manifold
$(M,g)$ is called totally geodesic if for any vector fields $X,Y$ tangent to the
leaves od $\mathcal F$, we have  $\n_XY\in T\mathcal F$. \end{dfn}

\begin{dfn} A foliation $\mathcal F$ on a Riemannian manifold
$(M,g)$ is called conformal if, for any $V$ tangent to the
leaves od $\mathcal F$, the equation
$$L_Vg=\theta(V)g$$ holds on $T\mathcal F^{\perp}$, where $\theta$ is a one
form vanishing on $T\mathcal F^{\perp}$ . A foliation $\mathcal F$ is called homothetic if
it is conformal and $d\theta=0$.\end{dfn}
\medskip
\begin{dfn} A complex distribution $\De$ on a complex
manifold $(M,g,J)$ is called holomorphic if
$L_{\xi}J(TM)\subset\De$  for any $\xi\in\G(\De)$.\end{dfn}
\medskip
For a completness we recal the following theorem (see\cite{jel3}).  Note that here $d\om_I=2\theta\w\om_I$.
\begin{theorem} Let $(M,g,J)$ be a K\"ahler surface, dim M=4 and  let $\De,
dim\De=2$ be  a complex, conformal distribution  which means that
  $L_{\xi}g=\theta(\xi)g$ for $\xi\in\G(\De)$ on the distribution
$\E=\De^{\perp}$. Then for all $X,Y\in\G(\E)$ we have
$2\n_XY_{|\De}=-g(X,Y)\theta-\om(X,Y)J\theta$.\end{theorem}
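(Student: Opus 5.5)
The plan is to identify the $\De$-component of $\n_XY$ for $X,Y\in\G(\E)$ through the family of bilinear forms
$$B_\xi(X,Y)=g(\n_XY,\xi),\qquad \xi\in\G(\De),\ X,Y\in\G(\E),$$
and to split each $B_\xi$ into symmetric and skew parts on the $2$-dimensional $J$-invariant bundle $\E$. I use the convention $\om(X,Y)=g(JX,Y)$, and I identify $\theta$ with its metric dual vector field, which lies in $\De$ because $\theta$ vanishes on $\E$. With these conventions the statement is equivalent to
$$B_\xi(X,Y)=-\tfrac12\theta(\xi)g(X,Y)+\tfrac12\theta(J\xi)\om(X,Y),$$
since $\theta(J\xi)=g(\theta,J\xi)=-g(J\theta,\xi)$.

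\emph{Symmetric part.} Since $g(Y,\xi)=0$, we have $B_\xi(X,Y)=-g(Y,\n_X\xi)$. Therefore
$$B_\xi(X,Y)+B_\xi(Y,X)=-\big(g(\n_X\xi,Y)+g(\n_Y\xi,X)\big)=-(L_\xi g)(X,Y)=-\theta(\xi)g(X,Y),$$
using the conformality hypothesis on $\E$. So the symmetric part of $B_\xi$ is $-\tfrac12\theta(\xi)g|_{\E}$.

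\emph{Skew part.} Because $\E$ has rank $2$ and is $J$-invariant, every skew form on $\E$ is a multiple of $\om|_{\E}$. Hence we may write
$$B_\xi=-\tfrac12\theta(\xi)g+\lambda(\xi)\om \quad\text{on }\E,$$
for some $1$-form $\lambda$ on $\De$.

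\emph{Determining $\lambda$.} This is the main step, and it uses that the structure is K\"ahler. Since $\n J=0$ and $J$ is $g$-orthogonal,
$$B_\xi(X,JY)=g(J\n_XY,\xi)=-g(\n_XY,J\xi)=-B_{J\xi}(X,Y).$$
Here $J\xi\in\De$ because $\De$ is complex. Now substitute the decomposition into both sides. With $g(X,JY)=-\om(X,Y)$ and $\om(X,JY)=g(X,Y)$, the left side is
$$\tfrac12\theta(\xi)\om(X,Y)+\lambda(\xi)g(X,Y).$$
The right side is
$$\tfrac12\theta(J\xi)g(X,Y)-\lambda(J\xi)\om(X,Y).$$
Comparing the coefficients of $g$ gives $\lambda(\xi)=\tfrac12\theta(J\xi)$. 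The coefficient of $\om$ gives $\lambda(J\xi)=-\tfrac12\theta(\xi)$, which is consistent with the first relation because $J^2=-1$.

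Substituting $\lambda$ back yields exactly the displayed identity for $B_\xi$. That identity is the claimed formula $2\n_XY_{|\De}=-g(X,Y)\theta-\om(X,Y)J\theta$. The only delicate point I expect is keeping the sign conventions for $\om$ and for the dualization of $\theta$ consistent; with the opposite convention $\om(X,Y)=g(X,JY)$ the sign of the $J\theta$ term flips.
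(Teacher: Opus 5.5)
Your proof is correct and follows essentially the same route as the paper: the symmetric part of $(\n_XY)_{|\De}$ comes from conformality. The skew part, necessarily a multiple of $\om|_{\E}$ since $\E$ has rank $2$, is forced by $\n J=0$ together with the $J$-invariance of $\De$. The paper extracts it by computing $\n_{JX}X=-J\n_{JX}JX$ and $\n_XJX=J\n_XX$ to get $[X,JX]_{|\De}$, while you package the same information in the identity $B_\xi(X,JY)=-B_{J\xi}(X,Y)$.

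Your bookkeeping is also cleaner. It gives $[X,Y]_{|\De}=-\om(X,Y)J\theta$, which is what the final formula requires, whereas the paper's intermediate line states this bracket with a $+$ sign (a slip that does not affect its conclusion).
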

\begin{proof}   We have  $g(\n_X\xi,Y)+g(X,\n_Y\xi)=\theta(\xi)g(X,Y)$.
Hence $g(\n_XY+\n_YX,\xi)=-\theta(\xi)g(X,Y)$. On the other hand
$\n_{JX}JX_{|\De} = -\frac12\theta g(X,Y)$ which implies $$\n_{JX}X_{|\De}=\frac12\om(X,JX)J\theta.$$ Note that
$2\n_XJX_{|\De}=-\frac12\om(X,JX)J\theta$ .  Consequently
$[X,JX]_{|\De}=\om(X,JX)J\theta$  and $[X,Y]_{|\De}=\om(X,Y)J\theta$ which means that
$2\n_XY_{|\De}=-\om(X,Y)J\theta-g(X,Y)\theta$ for $X,Y\in\E=\De^{\perp}$.\end{proof}

 \section{ AmbiK\"ahler K\"ahler surfaces with an opposite structure given by a foliation}   Let $(M,g,J)$ be a K\"ahler surface with a K\"ahler form  $\om(X,Y)=g(JX,Y)$ and   $\De$ be a totally geodesic and holomorphic foliation. Then  $\De$   defines an opposite Hermitian structure $I$  such that  \begin{equation}J_{|\De}=I_{|\De},  J_{|\De^{\perp}}=-I_{|\De^{\perp}}.\end{equation}  Let  $\om_I(X,Y)=g(IX,Y)$ be a K\"ahler form of $(M,g,I)$.  Then   $d\om_I=2\theta\w\om_I$  where $\theta$ is a Lee form of $(M,g,I)$. Next we shall assume that $\theta\ne0$ and    $\theta\in\De$, $d\theta=0$.    Locally  $\theta=du$ and $\n_{\n u}\n u\in\De$.  If  $H^1(M)=0$  then $u$ is defined globally.  We shall assume that  $\theta=du$.
\begin{theorem} A K\"ahler surface $(M,g,J)$ with  an opposite non-K\"ahler  conformally K\"ahler structure $I$ defined by an integrable complex distribution $\De$ is of Calabi type in the set  $U=\{x\in M:\theta_x\ne0\}$ where $\theta$ is the Lee form  of the structure $I$. The surface  $(M,g,J)$ admits a special K\"ahler-Ricii potential. \end{theorem}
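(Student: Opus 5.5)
The strategy is to build the holomorphic Killing field directly from the Lee form. In the setting fixed just before the theorem we have $\theta=du\neq 0$ on $U$, $\theta\in\De$, $d\theta=0$. I would take
\[
X=J\n u .
\]
Since $\theta\in\De$ and $\De$ is $J$-invariant, both $\n u$ and $J\n u$ lie in $\De$. On $U$ they span it, so $\De=span\{X,JX\}$. By the definition of Calabi type given in the Introduction, it then remains to prove two things: that $X$ is a holomorphic Killing field for $(g,J)$, and that $(M,g,J)$ carries a special K\"ahler-Ricci potential.

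\textbf{Step 1: a formula for $\n\n u$.} I would compute $\n\theta$ block by block with respect to $TM=\De\oplus\E$, $\E=\De^{\perp}$, using three inputs:
\begin{itemize}
\item the Theorem of Section 2, $2\n_XY_{|\De}=-g(X,Y)\theta-\om(X,Y)J\theta$ for $X,Y\in\G(\E)$;
\item the fact that $\De$ is totally geodesic;
\item the fact that $I$ is conformally K\"ahler, i.e. $d\theta=0$ so $\n\theta$ is symmetric, together with $\n J=0$ and $\n\om_I$ expressed through $\theta$ by the standard Hermitian identity for $d\om_I=2\theta\w\om_I$.
\end{itemize}
The $\E\times\E$ block should be $-\tfrac12|\theta|^2 g$, coming from $g(\n_X\theta,Y)=-g(\theta,\n_XY)$ and the Section 2 formula. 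The mixed blocks should vanish, using total geodesy and the symmetry of $\n\theta$. For the $\De\times\De$ block I would use the integrability condition $\n_{\n u}\n u\in\De$ together with the Hermitian condition on $I$; the aim is to show it is $J$-invariant, i.e. $\n du$ restricted to $\De$ is of type $(1,1)$.

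\textbf{Step 2: $X$ is holomorphic Killing.} If Step 1 shows $\n du$ is $J$-invariant on all of $TM$, then $\n\n u$ commutes with $J$. This makes $J\n u$ a Killing field, because $\n(J\n u)=J\circ\n\n u$ is then skew. It also makes $\n u$ holomorphic, and hence $X$ is holomorphic. The main obstacle is the $\De\times\De$ block, i.e. the $J$-anti-invariant part of $\n du|_{\De}$. I would first try to show it vanishes from the Codazzi-type identity obtained by differentiating the Section 2 formula along $\De$, using that $\theta$ is closed. If that fails, I would use $d\theta^-=0$ as in \cite{jel3}.

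\textbf{Step 3: the special K\"ahler-Ricci potential.} Once $u$ (or a suitable function $\tau=\tau(u)$) has $\n\tau$ holomorphic with $J\n\tau$ Killing, it is a K\"ahler-Ricci potential provided $\rho$ is compatible. For this I would use the QCH structure: the Ricci tensor is $I$-invariant, so $\rho$ has only $\De$ and $\E$ eigenblocks. Comparing with Step 1, $\n d\tau$ is then $g$-conformal on $\E$ and on $\De$ separately, with the $\De$ eigenvalue depending only on $\tau$. That last property is exactly the ``special'' condition, and together these show $\tau$ is a special K\"ahler-Ricci potential.
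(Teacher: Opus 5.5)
Your Step 2 aims at something false, so the plan cannot be completed. On $U$ the restriction $\n du|_{\De}$ is \emph{never} $J$-invariant. Hence neither $\n u$ nor $X=J\n u$ is holomorphic, and $J\n u$ is not Killing. The paper's computation shows this. It starts from total geodesy, the Section 2 formula, $dd^cu=\lambda\om_2+\mu\om_1$ and $d(dd^cu)=0$, and derives:
\begin{itemize}
\item $\mu=Q:=|\n u|^2$ and $dQ\w du=0$;
\item $\n_{\n u}\n u=\lambda_1\n u$ and $\n_{J\n u}\n u=\lambda_2J\n u$;
\item $|\lambda_1-\lambda_2|=Q=|\theta|^2$. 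The paper records this as $\lambda_2=\lambda_1-Q$; only $\lambda_1\neq\lambda_2$ matters here.
\end{itemize}
So the $J$-anti-invariant part of $\n du|_{\De}$ has eigenvalues $\pm\tfrac12|\theta|^2\neq 0$. No Codazzi-type identity can remove it. Appealing to $d\theta^-=0$ gives nothing new, since $d\theta=0$ already.

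A concrete check is flat $\mathbb C^2\setminus\{0\}$ with $\De$ the complex lines through $0$. It satisfies all the hypotheses: $I$ is, up to sign, the pull-back of $J$ by the inversion $w\mapsto w/|w|^2$, and it is K\"ahler for $|w|^{-4}g$. So $\theta=d\ln|w|^2$. Here $J\n u=2iw/|w|^2$ is not affine, hence not Killing, whereas $J\n|w|^2=2iw$ is Killing.

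The missing idea is to reparametrize by a function $\tau=\tau(u)$. We have $\n d\tau=\tau'\,\n du+\tau''\,du\otimes du$. On $\De$, the anti-invariant part of $\n du$ is $\frac{\lambda_1-\lambda_2}{Q}$ times that of $du\otimes du$. Therefore $\n d\tau$ becomes $J$-invariant exactly when $\tau''/\tau'=(\lambda_2-\lambda_1)/Q$, which is the constant $\pm1$. That is, $\tau$ must be an exponential of $u$; in the flat example, $\tau=e^{u}=|w|^2$ up to constants. The paper imposes this as $[\xi,J\xi]=0$ for $\xi=g(u)\n u$. That step presupposes that $Q$ depends only on $u$ and that $\n u$, $J\n u$ diagonalize $\n du|_{\De}$, neither of which your Step 1 establishes.

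Two smaller points:
\begin{itemize}
\item The $\E\times\E$ block is $+\tfrac12|\theta|^2g$, not $-\tfrac12|\theta|^2g$. This follows from $g(\n_X\theta,Y)=-\theta(\n_XY)$ and the Section 2 formula.
\item In Step 3 you take $I$-invariance of $Ric$ from the QCH property, but QCH is part of what is being concluded here, unless you invoke the earlier result quoted in the Introduction. The paper instead gets that $\n\tau$ is a Ricci eigenvector from $d\Delta\tau=Ric(\n\tau,\cdot)$ for Killing potentials together with $d\Delta\tau\w d\tau=0$. The $I$-invariance of $Ric$ and the special condition on $\De^{\perp}$ then follow.
\end{itemize}
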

   \begin{proof}  Since $\De$ is totally geodesic we have$H^u(\n u,X)=0$ for  $X\in\De^{\perp}$. Denote  $Q=g(\n u,\n u)$. Then for $X\in\De^{\perp}$  we get  $XQ=2g(\n_X\n u,\n u)=2H^u(\n u,X)=0$. Note that   $grad Q=2\n_{\n u}\n u\in \De$. Let us define the 2-form  $\om_2=\frac1Qdu\w Jdu=\frac1Qdu\w d^cu$.    Then   $\om=\om_1+\om_2$   and   $\om_I=\om_2-\om_1$.
Recall that    $$dd^cu=H^u(JX,Y)-H^u(JY,X)$$  and  consequently  $$dd^cu=\lambda\om_2+\mu\om_1.$$   Thus   $$0=d\lambda\w\om_2+\lambda d\om_2+d\mu\w\om_1+\mu d\om_1.$$    Hence  $d\mu\w du=0$ since  $d\om_1=-\theta\w\om_1, d\om_2= \theta\w\om_1$.  Now we prove that $\mu= Q$. In fact for $X,Y\in\De^{\perp}$
$$(\n_XY)_{|\De}=-\frac12g(X,Y)\n u -\frac12\om(X,Y)J\n u.$$  Hence  $$H^u(X,Y)=-g(\n u,\n_XY)=\frac12g(X,Y)Q.$$ From the equality $dQ\w du=0$ it follows that $dQ=2\lambda_1du$ which means that $Q'(u)=2\lambda_1$ and  $\n_{\n u}\n u=\lambda_1\n u$.    Note that also  $d\lambda_1\w du=0$.   Hence  $\n_{J\n u}\n u=\lambda_2J\n\ u$. We also have    $J\theta\w dJ\theta=\mu J\theta\w \om_1$. Consequently

\begin{equation} (d J\theta)^2= d\mu\w J\theta\w\om_1+\mu dJ\theta\w\om_1-\mu J\theta\theta\w\om_1.\end{equation}

Note that  $\mu=Q(u)$ and
\begin{equation} 2\lambda\mu\om_1\w\om_2= Q'(u)|\theta|^2vol+\mu\lambda vol-\mu|\theta|^2vol.\end{equation}

Hence    $\mu\lambda vol=(Q'(u)-\mu)|\theta|^2vol$.  Hence  \begin{equation}\lambda=\lambda_1+\lambda_2=\frac{(Q'(u)-\mu)|\theta|^2}{|\theta|^2}=Q'(u)-|\theta|^2=2\lambda_1-Q.\end{equation}

  Consequently $d\lambda\w du=0$ and $d\lambda_2\w du=0$,$\lambda_2=\lambda_1-Q$.    Now we find a function   $g=g(u)$ in such a way that the field   $\xi=g\n u$ would be the gradient of some function  $\tau$  and  $J\n \tau$ would be a holomorphic  Killing vector field.
We choose  $g$ in such a way    that  $[\xi,J\xi]=0$.
\begin{gather} [g\n u,gJ\n u]=g(\n ug)J\n u-g(J\n ug)\n u+g^2[\n u, J\n u]\\=g(\n ug)J\n u+g^2[\n u, J\n u].\end{gather}  Hence   $$ g'(u)|\n u|^2=g(\lambda_2-\lambda_1)=-Qg$$   and  $$(\ln g)'=-1.$$  Let  $g=e^{-u}$.
If  $u=\ln v$ then  $\theta=d\ln v$  and $g=\frac1v$.

Let   $\tau=-g(u)$.  Then   $d\tau=gdu$.   Now we prove  that  $H^{\tau}(X,Y)=H^{\tau}(JX,JY)$.   First  $\n\tau\in\De$, hence   $H^{\tau}(X,Y)=H^{\tau}(JX,JY)$ for  $X,Y\in\De^{\perp}$. Note that  for $\xi=\n\tau$  we have  $J\n_{\xi}\xi=\n_{J\xi}\xi$,
 $J(\n_{\xi}J\xi)=-\n_{\xi}\xi=\n_{J\xi}J\xi$.   Also  $H^{\tau}(\xi,X)=H^{\tau}(J\xi,JX)=0$  for  $X\in\De^{\perp}$.  It follows that  $\n\tau$ is a holomorphic vector field  and $J\n\tau$ is a holomorphic Killing vector field.

$\om_2=\frac1Qd\tau\w d^c\tau$.   Now    \begin{equation}  dd^c\tau= 2\lambda \om_2+2\mu\om_1\end{equation}  for some functions $\lambda,\mu$  on $M$.
Thus
\begin{equation} 0=d\lambda\w\om_2+d\mu\w\om_1+(\mu-\lambda)d\om_2,\end{equation} where  $\om=g(J.,.)=\om_1+\om_2$.  But  $-d\om_1=d\om_2=\theta\w\om_1$.  Hence  $d\lambda\w\om_2=0$ and

 \begin{equation} d\om_2=-\frac{d\mu}{(\lambda-\mu)}\w\om_1.\end{equation}

On the other hand    \begin{equation} d\om_2=\frac1Qd\tau\w dd^c\tau=-2\frac{\mu}{|\n\tau|^2}d\tau\w\om_1.\end{equation}
It implies that \begin{equation}\frac{d\mu}{\lambda-\mu}=\frac{2\mu d\tau}{|\n\tau|^2}\end{equation} and

  \begin{equation}  d\mu=\frac{2(\lambda-\mu)\mu}{|\n\tau|^2}d\tau=\Lambda d\tau.\end{equation}

  Let    $Q=|\n\tau|^2$. Then

  \begin{gather*} d(\frac Q{\mu})(X)=\frac{2g(\n_X\n\tau,\n\tau)}{\mu}-\frac Q{\mu^2}\Lambda d\tau\\=\frac{2\lambda}{\mu}d\tau-\frac{Q\Lambda}{\mu^2}d\tau=\\(\frac{2\lambda\mu-Q\Lambda}{\mu^2})d\tau=2d\tau.\end{gather*}

    Hence \begin{equation} \frac Q{\mu}=2(\tau-c),\end{equation}   where  $c\in\mathbb R$.  Since  $\tau$ is the potential of a holomorphic Killing vector field  it follows that  $d(\Delta \tau)=Ric(\n\tau,.)$.  On the other hand $\Delta\tau=2\lambda+2\mu$  and  $d\Delta\tau\w d\tau=0$.   Hence $\n\tau$ is an eigenfield of the Ricci tensor $Ric$.  It follows that $\tau$ is a special K\"ahler-Ricii potential on  K\"ahler surface $(M,g,J)$.  Recall that $\tau$ is a special K\"ahler-Ricci potential on a K\"ahler manifold $(M,g,J)$ if $\tau$ is a nonconstant Killing potential on $(M,g,J)$ and, at every point with $d\tau\ne0$, all nonzero tangent vectors orthogonal to $v=\n \tau$ and $u=Jv$ are eigenvectors of both $\n d\tau$ and the Ricci tensor $Ric$ (see [2]).
  It is also clear that $Ric$ is $I$-invariant, hence  $(M,g,J)$ is a Calabi type QCH  K\"ahler surface.\end{proof}
While proving Th.3.1 we have used:

\begin{theorem}Let  $f,u:M\rightarrow \mathbb R$ be a smooth functions on a manifold  $M$.   Let  $M'=\{x\in M: d_xu\ne0\}$ be a dense, connected and open set in $M$.  If  $df\w du=0$  then there exists a smooth function $g:\mathbb R\rightarrow\mathbb R$ such  that   $f=g\circ u$.\end{theorem}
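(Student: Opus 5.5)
The plan is to build $g$ in three stages: locally near regular points of $u$, then on the whole image $u(M')$, then on all of $\mathbb R$ by extension. Throughout I write $u(M')\subset\mathbb R$. Since $M'$ is connected, $u(M')$ is a connected subset of $\mathbb R$, hence an interval; it is open because $u|_{M'}$ is a submersion.

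\emph{Step 1 (local factorization).} Fix $x\in M'$. Since $d_xu\ne0$, the implicit function theorem gives coordinates $(t,y_1,\dots,y_{n-1})$ on a neighbourhood $W$ of $x$ with $u=t$ on $W$. The condition $df\wedge du=0$ reads $df\wedge dt=0$, which forces $\partial f/\partial y_i=0$ for every $i$. Shrinking $W$ to a box, $f$ depends only on $t$. So $f=g_W\circ u$ on $W$, with $g_W(t)=f(t,0,\dots,0)$ smooth on the open interval $u(W)$. In particular, $f$ is locally constant along each fibre $u^{-1}(s)\cap M'$.

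\emph{Step 2 (globalizing on $u(M')$).} Define $g(s)=f(p)$ for any $p\in M'$ with $u(p)=s$. For this to be well defined, $f$ must be constant on the whole fibre $u^{-1}(s)\cap M'$, not just on each of its components.

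\begin{itemize}
\item I would first try a path argument. Join two points $p,q$ of the fibre by a path $\gamma$ in $M'$, cover $\gamma$ by boxes $W_1,\dots,W_k$ as in Step 1, and compare the functions $g_{W_j}$ on the overlapping intervals $u(W_j)\cap u(W_{j+1})$. Two local factorizations agree wherever they are both evaluated at a common point of $W_j\cap W_{j+1}$. The delicate issue is that $u(W_j)\cap u(W_{j+1})$ can be larger than $u(W_j\cap W_{j+1})$.
\item To avoid this, I would compare via the connected set $u^{-1}(J)\cap M'$ for small intervals $J$, using that $f-g_W\circ u$ is locally constant there.
\end{itemize}

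This fibre-consistency is the main obstacle. Connectedness of $M'$ alone does not obviously prevent a fibre from having components on which $f$ takes different values. So at this step I expect to have to use the structure of the situation where the theorem is applied. In the proof of Th.~3.1 one has $u$ with $\theta=du\ne0$ on the working set $U$, and $Q=|\nabla u|^2$ is constant along $\De^\perp$. The fibres of $u$ there are unions of $\De^\perp$-directions, which is what makes $f$ consistent. Once $g$ is well defined on $u(M')$, it is smooth there: near each $s=u(p)$ it coincides with the smooth function $g_W$.

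\emph{Step 3 (extension to $M$ and to $\mathbb R$).}
\begin{itemize}
\item Density of $M'$ and continuity of $f$ and $u$ give $f=g\circ u$ on all of $M$, provided $g$ extends continuously to $\overline{u(M')}$.
\item Finally I extend $g$ from the interval $u(M')$ to a smooth function on $\mathbb R$, by a Seeley/Whitney extension or simply by smooth continuation past the endpoints.
\end{itemize}

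Here lies a genuine difficulty. At a critical value $s_0=u(p_0)$ with $p_0\notin M'$, the function $g$ need not be smooth. For example, on $\mathbb R^2$ take $u=r^6$ and $f=r^2$. Then $df\wedge du=0$ and $M'=\mathbb R^2\setminus\{0\}$ is dense and connected, yet $g(t)=t^{1/3}$ is not smooth at $0$. So for the statement exactly as worded I would prove smoothness of $g$ on $u(M')$ and continuity up to the critical values. The required smoothness at critical values would rest on the fact that in the application (Th.~3.1) one works on $U$, where $du\ne0$, so $M'=M$ and no critical values occur.
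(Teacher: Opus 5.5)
Your Step 1 is exactly the paper's argument: coordinates with $x_1=u$, whence $\p_i f=0$ for $i\ge 2$. Beyond that, the paper only asserts that two local factorizations agree ``on the intersection'' and therefore glue, and observes that $u(M')=(\inf u,\sup u)$. It never addresses the overlap problem you isolate, smoothness at critical values, or extension to $\mathbb R$. Your example $u=r^6$, $f=r^2$ is correct, so the statement as worded is false and no proof of it can exist.

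The real gap in your plan is Step 2, and it cannot be closed: fibre consistency fails even when $du\neq0$ everywhere. Let
\[
M=\big((0,2)\times(1,2)\big)\cup\big((0,2)\times(-2,-1)\big)\cup\big((\tfrac32,2)\times(-2,2)\big)\subset\mathbb R^2,
\]
and let $u=x$. Put $f=\phi(x)$ for $y<0$ and $f=0$ for $y>0$, where $\phi$ is a bump function supported in $(0,1)$ with $\phi(\frac12)=1$. Then $M'=M$ is connected, $f$ is smooth, and $df\w du=0$. Yet $u(\frac12,\frac32)=u(\frac12,-\frac32)$ while $f(\frac12,\frac32)=0\neq1=f(\frac12,-\frac32)$.

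This is exactly the defect you spotted in the paper's gluing: $g_W=g_{W_1}$ on $u(W\cap W_1)$ says nothing on $u(W)\cap u(W_1)$. Your proposed repair fails here as well, since $u^{-1}(J)\cap M'$ is disconnected for every small interval $J\ni\frac12$. The appeal to Th.~3.1 does not help either: that the level sets of $u$ are tangent to $\De^\perp$ is a local property and says nothing about whether they are connected. What the computations in Th.~3.1 actually use (e.g.\ $dQ\w du=0\Rightarrow dQ=Q'(u)\,du$) is the local statement, which your Step 1 already proves.

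Step 3 is also flawed, independently of critical values. A Seeley/Whitney extension needs $g$ to be smooth up to the endpoint, and this can fail at an endpoint of $u(M)$ that $u$ does not attain. For $M=(0,1)\times\mathbb R$, $u=x$, $f=\sin(1/x)$, one has $M'=M$ and $df\w du=0$. But $g(s)=\sin(1/s)$ is forced on $(0,1)$ and has no continuous extension to $\mathbb R$. So your fallback ``$M'=M$, no critical values'' does not rescue the statement either.

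What is true, and what you should state instead, is the following. If the level sets of $u|_{M'}$ are connected, then there is a $g$ that is smooth on the open interval $u(M')$, continuous at those of its endpoints that are values of $u$, and satisfies $f=g\circ u$ on all of $M$. Smoothness of $g$ at critical values and a smooth extension to $\mathbb R$ must be dropped.
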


\begin{proof} First we  consider $M'$.    Let  $d_xu\ne0$ and  $(x_1=u,x_2,..,x_n)$ be a local map around $x$.  In this coordinates  $f=g(x_1,x_2,..,x_n)$ and since $df\w du=0$ we have  $\p_if=0$ for $i=2,3,..,n$.  Hence
$g=g(u)=f$. If we have two such maps then   $g(u)=g_1(u)$ on the intersection,  hence we can glue all such functions $g$ and obtain  $f=g(u)$ in $M'$. We shall assume that $M'$ is connected. Then  $u(M')$ is an interval in $\mathbb R$. Note that if $x\in M'$  then $u(x)$ is in the interior of $u(M')$.  Hence   $u(M')=(\inf u,  \sup u)$.\end{proof}

\begin{remark}  Note that if a K\"ahler surface  $(M,g,J)$, such that  $H^1(M)=0$  admits a special K\"ahler-Ricci potential  $\tau$  with  $\mu\ne0$  and such that  $\tau\ne c$ everywhere, then   $(M,g,J)$ is ambi-K\"ahler QCH surface.  It admits an opposite conformally K\"ahler Hermitian structure $I$  such that on the set  $U=\{x:\theta_x\ne0\}$  it is of Calabi type.\end{remark}

\begin{proof}   If  $\tau$ is a special K\"ahler-Ricci potential on $(M,g,J)$ then  $\De =span\{\nabla\tau,$ $J\nabla \tau\}$ is a totally geodesic and holomorphic foliation.   Hence  the structure $I$ determined by $\De$ on the set $U=\{x:d\tau(x)\ne0\}$  is Hermitian. It is also clear that is locally  conformally K\"ahler and hence conformally K\"ahler since $H^1(M)=0$.  If   $\tau$ is everywhere different from $c$  then  the foliation $\De$ extends to the whole of $M$  (see \cite{der1}).  Hence $I$ is globally determined  and  $(M,g,J)$  is ambi-K\"ahler with an opposite structure $I$. It is clear that  $(M,g,J)$ is of Calabi type.\end{proof}

\end{document}